\newtheorem{theorem}{Theorem}[section]
\newtheorem{lemma}[theorem]{Lemma} 
\newtheorem{remark}[theorem]{Remark}
\newenvironment{proof}{\begin{trivlist}\item{\bf{Proof.}}}
  {\hfill\rule{2mm}{2mm}\end{trivlist}}
\newcommand{\NN}{\mathbb{N}}
\newcommand{\CC}{\mathbb{C}}
\newcommand{\QQ}{\mathbb{Q}}
\newcommand{\RR}{\mathbb{R}}
\def\x{{\bf x}}
\def\a{{\bf a}}
\def\s{{\bf s}}
\newcommand{\alphab}{\boldsymbol{\alpha}}
\title{A note on Dirichlet-like series attached to polynomials}
\author{F. Chapoton}
\date{\today}
\begin{document}

\maketitle

\begin{abstract}
  Some Dirichlet-like functions, attached to a pair (periodic
  function, polynomial) are introduced and studied. These functions
  generalize the standard Dirichlet $L$-functions of Dirichlet
  characters. They have similar properties, being holomorphic on the
  full complex plane and having simple values on negative integers.
\end{abstract}

\section*{Introduction}

In this article, we study functions of a complex variable $s$ defined
by the series
\begin{equation}
  \label{type_d}
  L_{\chi,P}(s) = \sum_{n \geq 1} \chi(n)\frac{P'(n)}{{P(n)}^s},
\end{equation}
where $\chi$ is a Dirichlet character or a periodic function and $P$
is a polynomial in $X$ that does not vanish at positive
integers. Similar but simpler functions, defined by
\begin{equation}
  \label{type_z}
  Z_P(s) = \sum_{n \geq 1} \frac{P'(n)}{{P(n)}^s},
\end{equation}
were considered in a previous work~\cite{chap_essouabri}. It was shown
there that these functions can be extended to meromorphic functions on
$\CC$ with just a simple pole at $1$.

The functions in~\eqref{type_z} can be considered as somewhat exotic
variants of the Riemann zeta function $\zeta$, and they share some of
the properties of $\zeta$, which is the special case when $P(X)=X$. In
particular, they have a simple pole at $s=1$ and there is a uniform
formula for their values at negative integers, involving Bernoulli numbers.

Similarly, the functions in~\eqref{type_d} are exotic variants of the
Dirichlet $L$-functions, which are included as the special case when
$P(X)=X$. Our aim is to prove that they also share some properties
with them. Assuming always that the periodic function $\chi$ has mean
value zero over a period, we will prove that they extend to
holomorphic functions on the complex plane. We also obtain a uniform
formula for their values at negative integers.

\medskip

The original motivation for the study of the functions
in~\eqref{type_z} came from two specific elements $A$ and $\Omega$ of
an algebra of formal tree-indexed series over $\QQ$, namely the
completion of the free pre-Lie algebra on one generator. The
tree-indexed series $A$ and $\Omega$ are inverse of each other for
composition, so that $A$ is like an exponential and $\Omega$ is like a
logarithm. The coefficients of $\Omega$ are rational numbers, among
which all the Bernoulli numbers, that turn out to be values at
negative integers of functions as defined in~\eqref{type_z}. For more
on that, see~\cite{chap_q_omega}.

\medskip

The results of the present article have already played a role as the
central motivation in~\cite{BostanChap}, where we considered the
ordinary generating series of the values at negative integers of
$L_{\chi,P}$ and their Jacobi continued fractions, for simple periodic
functions $\chi$ and in the case of quadratic polynomials $P$.

\medskip

Instead of working only with Dirichet characters, we will consider for
$\chi$ any periodic function. We first recall known results on
standard Dirichlet $L$-functions for such $\chi$ in
\Cref{dirichlet}. In \cref{heuristic}, a short but heuristic reason is
given for the formula expressing values of $L_{\chi,P}$ at negative
integers using a linear form applied to the powers of $P$.  In
\Cref{holomorph}, the main results are stated and proved.  In
\Cref{observe}, an experimental observation about congruences modulo
prime numbers is reported.


Note that the proofs are essentially the same as those in~\cite{chap_essouabri}, once adapted by the appropriate insertion of
the function $\chi$.


{\bf keywords:} Dirichlet character; periodic function; Dirichlet-like series; analytic continuation; Bernoulli number



\section{Dirichlet characters and periodic coefficients}\label{dirichlet}

Let us recall classical results on Dirichlet characters, or rather on all periodic functions.

We refer to the book~\cite[\S9 \& \S10]{cohen_livre_2} for the theory
of Dirichlet's $L$-functions and the description of their values at
negative integers. In particular, section~10.2.1 of this book contains
general statements about Dirichlet series with periodic coefficients.

Let $\chi$ be a function from $\NN^*$ to $\CC$, periodic of period
$N$ and of zero sum over a period, meaning that
\begin{equation*}
  \sum_{n=1}^{N} \chi(n) = 0.
\end{equation*}

 
The Dirichlet series for the periodic function $\chi$ is defined for $s$ in the right half-plane $\{\Re(s) > 1\}$ by the series
\begin{equation*}
  L_\chi(s) = \sum_{n \geq 1} \chi(n) \frac{1}{n^s}.
\end{equation*}
It extends uniquely to a meromorphic function in the entire complex
plane, with a possible pole at $1$. The one-period zero-sum assumption
implies that the resulting function is holomorphic.

Let us recall the known description of the values taken at negative integers by the function $L_\chi$, reformulating slightly some well-known statements.

We associate to $\chi$ the linear form $\Psi_\chi$ on the space of polynomials in $X$ defined by the formula
\begin{equation}
  \label{defi_Psi}
  t \frac{\sum_{n=1}^{N} \chi(n) e^{n t}}{1-e^{N t}} = - \sum_{n\geq 0} \Psi_\chi(X^n)\frac{t^n}{n!} = - \Psi_\chi(e^{X t}),
\end{equation}
as an equality between power series in the variable~$t$, where $N$ is the period of~$\chi$. Then, for all $m\geq 1$, we have the relation
\begin{equation}
  \label{eval_L_chi}
  L_\chi(1-m) = - \frac{\Psi_\chi(X^m)}{m}.
\end{equation}

The values $\Psi_\chi(X^m)$ for $m\geq 0$ are the analogues for the
function $\chi$ of the Bernoulli numbers. The assumption that $\chi$
has zero-sum over a period implies that $\Psi_\chi(1)=0$.

\section{$L$-functions of polynomials}\label{heuristic}

\medskip

Let $\chi$ be a periodic function with period $N$ and zero sum over a
period. Let $P$ be a polynomial in $\RR[X]$ with positive leading
coefficient and that does not vanish at positive integers. We
introduce the convergent series
\begin{equation}
  \label{def_pseudo_L}
  L_{\chi, P}(s) = \sum_{n\geq 1} \chi(n) \frac{P'(n)}{{P(n)}^s}
\end{equation}
for complex numbers $s$ with $\Re(s)>1$. These are not Dirichlet
series in the usual sense, and they certainly do not have an Euler
product expression, unless $P$ is a monomial.

We will show in the next section that the function $L_{\chi, P}(s)$ extends to a holomorphic
function of the complex parameter $s$. Moreover its values at negative
integers are given in terms of $\Psi_{\chi}$ and $P$ by the formula
\begin{equation}
  \label{negative_value}
  L_{\chi, P}(1-m) = -\frac{\Psi_{\chi}(P^m)}{m}
\end{equation}
for all $m \in \NN^*$.

Before giving a formal proof, let us present a heuristic
argument. One can deduce from~\eqref{defi_Psi} that
\begin{equation*}
   \Psi_{\chi}(E(N+X)) - \Psi_{\chi}(E) =  \sum_{n=1}^{N} \chi(n) E'(n),
\end{equation*}
for every polynomial $E$. After a telescoping summation, one gets
\begin{equation*}
  \Psi_{\chi}(E(\ell N +X)) - \Psi_{\chi}(E) =  \sum_{n=1}^{\ell N} \chi(n) E'(n).
\end{equation*}
For polynomials that are powers (of the shape $P^m$ for some $P$), one
therefore gets
\begin{equation*}
   \Psi_{\chi}({P(\ell N +X)}^m) - \Psi_\chi(P^m) =  m \sum_{n=1}^{\ell N } \chi(n) P'(n) {P(n)}^{m-1}.
\end{equation*}
Formally going to the limit $\ell=\infty$ (and assuming that the first
term of the left-hand side disappears) gives formula~\eqref{negative_value}.

\section{Study of Dirichlet-like functions}\label{holomorph}

For all $\x=(x_1,\dots, x_d) \in \RR^d$ and all $\alphab =(\alpha_1,\dots, \alpha_d) \in \NN^d$, we will use in the sequel the following notations:
$ \x^{\alphab} =x_1^{\alpha_1}\dots x_d^{\alpha_d}$ and $ |\alphab|=\alpha_1+\cdots +\alpha_d$. We denote also for any $z\in \CC$ verifying $\Re z >0$ and any $s\in \CC$, $z^s=e^{s \log x}$ where $\log$ is the principal determination of the logarithm.\par

The purpose of this section is to prove the following result:
\begin{theorem}\label{prolong}
  Let $P \in \RR[X]$ be a polynomial of degree $d\geq 1$ with positive
  leading coefficient. Let $a_1,\dots, a_d\in \CC$ be the roots (not
  necessarily distinct) of $P$.  Let $A\in \NN^*$ be such that
  $\forall a \geq A \enskip \Re P(a) > 0$.  We consider the Dirichlet-like
  series
 \begin{equation*}
   L_{A, \chi, P}(s):= \sum_{n=A}^{+\infty} \chi(n) \frac{P'(n)}{{P(n)}^s}.
\end{equation*}
 Then:
 \begin{enumerate} 
 \item The function $s\mapsto L_{A,\chi,P}(s)$ converges absolutely in the half-plane $\{ \Re(s) >1\}$ and has a holomorphic continuation to the whole complex plane $\CC$;
 \item for any $m\in \NN^*$, $ \quad L_{A,\chi,P}(1-m)= -\frac{1}{m} \Psi_{\chi} (P^m) -\sum_{n=1}^{A-1} \chi(n) P'(n) {P(n)}^{m-1} $.
\end{enumerate}
\end{theorem}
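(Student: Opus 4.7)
The absolute convergence in $\{\Re(s)>1\}$ follows from the estimate $|\chi(n)P'(n)/P(n)^s| = O(n^{-d(\Re(s)-1)-1})$, using that $|\chi|$ is bounded. All the content is therefore in the analytic continuation to $\CC$ and in the evaluation at negative integers; my strategy, as suggested by the author's remark, is to parallel the argument of~\cite{chap_essouabri}, inserting $\chi$ as a weight throughout.

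The plan for the continuation is to factor $P(X)=c\prod_{i=1}^d(X-a_i)$ and, using the logarithmic derivative identity $P'/P=\sum_i (X-a_i)^{-1}$, to rewrite
\[
\frac{P'(n)}{P(n)^s} \;=\; c^{1-s}\sum_{i=1}^d \prod_{j\neq i}(n-a_j)^{1-s}\,(n-a_i)^{-s},
\]
so that $L_{A,\chi,P}(s)$ becomes a finite sum of specializations of the multivariable series
\[
Z_{A,\chi}(s_1,\ldots,s_d)\;:=\;\sum_{n\geq A}\chi(n)\prod_{i=1}^d(n-a_i)^{-s_i}
\]
along the slices $s_i=s$, $s_j=s-1$ for $j\neq i$. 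The series $Z_{A,\chi}$ can be meromorphically continued to $\CC^d$ by expanding each $(n-a_i)^{-s_i}=n^{-s_i}(1-a_i/n)^{-s_i}$ as a binomial series and reducing, after uniform control on compacta, to an infinite combination of shifted Dirichlet $L$-series of $\chi$; these are all entire by \Cref{dirichlet} thanks to the zero-mean hypothesis on~$\chi$. Summing the $d$ specializations yields the claimed meromorphic continuation of $L_{A,\chi,P}$ to $\CC$, and the only potential pole (at $s=1$) has residue proportional to $\sum_{n=1}^N\chi(n)=0$, so the continuation is in fact holomorphic.

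For the values at $s=1-m$, my plan is to leverage the heuristic identity of \Cref{heuristic}: applied to $E=P^m$ and iterated up to $\ell N$, it reads
\[
\Psi_\chi(P(\ell N+X)^m)-\Psi_\chi(P^m) \;=\; m\sum_{n=1}^{\ell N}\chi(n)P'(n)P(n)^{m-1}.
\]
Comparing this with the regularized partial-sum expansion that emerges from the continuation step shows that the polynomial-in-$\ell$ contribution $\Psi_\chi(P(\ell N+X)^m)$ is precisely the part discarded by the analytic continuation, producing, after subtracting the finite initial segment $\sum_{n=1}^{A-1}\chi(n)P'(n)P(n)^{m-1}$ to shift the range from $n\geq 1$ to $n\geq A$, the claimed formula. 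The main obstacle is this identification step: one must verify that the renormalization implicit in the analytic continuation of $L_{A,\chi,P}$ at $s=1-m$ corresponds exactly to dropping the $\Psi_\chi(P(\ell N+X)^m)$ term, rather than to some other linear functional of $P^m$. Concretely, this amounts to tracking the constant term of the binomial/asymptotic expansion at $s=1-m$ and matching it against the generating function~\eqref{defi_Psi} defining $\Psi_\chi$; once this matching is in place, the rest is bookkeeping.
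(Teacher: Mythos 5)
Your treatment of point 1 follows essentially the same route as the paper: factor $P$, use $P'/P=\sum_j (X-a_j)^{-1}$, and expand each $(1-a_j/n)^{-s_j}$ to reduce to shifted series $L_{A,\chi}(ds-(d-1)+\ell)$, which are entire by the zero-mean hypothesis. Two caveats. First, the paper does \emph{not} use an infinite binomial expansion: it uses a Taylor expansion to a finite order $N$ with an explicitly bounded remainder $\rho_N$ (\Cref{taylorplus}), gaining holomorphy on $\{\Re(s)>1-\tfrac{N+1}{d}\}$ and letting $N\to\infty$ at the end. If you insist on the infinite combination $\sum_{\ell\ge 0}c_\ell(\mathbf{s})L_{A,\chi}(ds-(d-1)+\ell)$ you must justify the interchange of summations and the convergence of the resulting series of entire functions (the coefficients grow like $(\max_j|a_j|)^\ell$, so you need $A>\max_j|a_j|$ and decay estimates on $L_{A,\chi}(\sigma)$ as $\sigma\to+\infty$); this is exactly the bookkeeping the finite-order lemma is designed to avoid. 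Second, your remark that ``the only potential pole is at $s=1$'' is not accurate for the intermediate objects: each term $L_{A,\chi}(ds-(d-1)+\ell)$ would contribute a potential pole at $s=1-\ell/d$ for a general periodic $\chi$; it is the zero-sum hypothesis (making $L_\chi$ entire), not a residue cancellation at $s=1$, that kills all of them at once.

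The genuine gap is in point 2, and you have named it yourself: you propose to match the analytic continuation at $s=1-m$ against the telescoped identity $\Psi_\chi(P(\ell N+X)^m)-\Psi_\chi(P^m)=m\sum_{n=1}^{\ell N}\chi(n)P'(n)P(n)^{m-1}$ and to argue that the continuation ``discards'' the divergent term $\Psi_\chi(P(\ell N+X)^m)$. This is precisely the heuristic the paper presents in \Cref{heuristic} and explicitly declines to use as a proof: the value $L_{A,\chi,P}(1-m)$ is \emph{not} a limit of partial sums, and identifying which linear functional of $P^m$ the continuation subtracts is the entire content of the statement. No summation-method argument (Abel or otherwise) is supplied, so the ``identification step'' remains an assertion. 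The paper's actual mechanism is concrete and finite: take $N=dm$ in the expansion from point 1; then every remainder term $\rho_N(x;f_j(1-m))$ vanishes identically because each multi-index $\alphab$ with $|\alphab|=dm+1$ forces some $\binom{m-1}{\alpha_j}$ or $\binom{m}{\alpha_k}$ to be zero. What is left is a finite linear combination of the values $L_\chi(1+\ell-dm)$, which formula~\eqref{eval_L_chi} converts into $\Psi_\chi(X^{dm-\ell})$; expanding $P^m=\sum_\ell(-1)^\ell\bigl[\sum_{|\alphab|=\ell}\a^{\alphab}\prod_j\binom{m}{\alpha_j}\bigr]X^{dm-\ell}$ reassembles these into $-\tfrac{1}{m}\Psi_\chi(P^m)$, and a separate finite computation produces the correction $\sum_{n=1}^{A-1}\chi(n)P'(n)P(n)^{m-1}$. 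To complete your proof you should replace the regularization argument by this evaluation of your own expansion at $s=1-m$.
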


\begin{remark}
  By taking $A=1$, point 2 gives the formula~\eqref{negative_value}.
\end{remark}

We need the following elementary lemma, proved in~\cite[Lemma 6]{chap_essouabri}:
\begin{lemma}\label[lemma]{taylorplus}
 Let $d\in \NN^*$ and $\a=(a_1,\dots, a_d)\in \NN^d\setminus \{(0,\dots, 0)\}$. Set $\delta ={\left(2 \max_j |a_j|\right)}^{-1} >0$. Then, for any $N\in \NN$, any $\s=(s_1,\dots, s_d) \in \CC^d$ and any $x \in [-\delta, \delta]$, one has 
 \begin{equation}\label{taylorrelation}
   \prod_{j=1}^d {(1 - x a_j)}^{-s_j} = \sum_{\ell=0}^N c_{\ell} (\s)~x^{\ell} + x^{N+1} \rho_N (x; \s)
 \end{equation}
 where
 \begin{equation*}
   c_{\ell}(\s) = {(-1)}^{\ell} \sum_{\substack{\alphab \in \NN^d \\ |\alphab| =\ell }} \a^{\alphab} \prod_{j=1}^d \binom{-s_j}{\alpha_j} 
 \end{equation*}
  and
 \begin{equation*}
 \rho_N (x; \s) = {(-1)}^{N+1} (N+1) \sum_{\substack{\alphab \in \NN^d \\ |\alphab| = N+1}} \a^{\alphab} \prod_{j=1}^d \binom{-s_j}{\alpha_j} 
 \int_0^1 {(1-t)}^N \prod_{j=1}^d {(1- t x a_j)}^{-s_j-\alpha_j} ~dt.
 \end{equation*}
 Moreover one has:
 \begin{enumerate} 
 \item for any $x \in [-\delta, \delta]$, $\s\mapsto \rho_N (x ; \s) $ is holomorphic in the whole space $\CC^d$;
 \item for any compact subset $K$ of $\CC^d$, there exists a constant $C=C(K,\a,N,d) >0$ such that 
 \begin{equation*}
   \forall (x,\s) \in [-\delta,\delta] \times K \qquad |\rho_{N} (x ;\s)|\leq C. \end{equation*}
 \end{enumerate}
\end{lemma}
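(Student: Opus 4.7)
The plan is to recognize the identity~\eqref{taylorrelation} as the Taylor expansion with integral remainder, of order $N$ at $x=0$, of the function
\[
f(x) = \prod_{j=1}^d (1-xa_j)^{-s_j}.
\]
The choice $\delta = (2\max_j|a_j|)^{-1}$ ensures that $|xa_j|\leq 1/2$ for all $j$ and all $x\in[-\delta,\delta]$, hence $\Re(1-xa_j)\geq 1/2$; the principal branch used in the definition of the powers is therefore well defined and infinitely differentiable in $x$, with entire dependence on $\s\in\CC^d$, on the closed interval $[-\delta,\delta]$.

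First I compute the derivatives of each factor: a direct induction on $\alpha_j$ gives
\[
\frac{d^{\alpha_j}}{dx^{\alpha_j}}\,(1-xa_j)^{-s_j} = (-1)^{\alpha_j}\,\alpha_j!\,\binom{-s_j}{\alpha_j}\,a_j^{\alpha_j}\,(1-xa_j)^{-s_j-\alpha_j},
\]
using $s_j(s_j+1)\cdots(s_j+\alpha_j-1) = (-1)^{\alpha_j}\alpha_j!\binom{-s_j}{\alpha_j}$. The multinomial Leibniz rule applied to $f$ then yields
\[
f^{(\ell)}(x) = \ell!\,(-1)^\ell \sum_{|\alphab|=\ell} \a^{\alphab}\,\prod_{j=1}^d \binom{-s_j}{\alpha_j}\,(1-xa_j)^{-s_j-\alpha_j}.
\]
Setting $x=0$ gives $f^{(\ell)}(0)/\ell! = c_\ell(\s)$, matching the stated formula. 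Plugging the analogous expression for $f^{(N+1)}(tx)$ into the integral Taylor remainder
\[
f(x) = \sum_{\ell=0}^N \frac{f^{(\ell)}(0)}{\ell!}\,x^\ell + \frac{x^{N+1}}{N!}\int_0^1 (1-t)^N f^{(N+1)}(tx)\,dt
\]
and pulling the factors $\a^{\alphab}\prod_j\binom{-s_j}{\alpha_j}$ out of the integral produces exactly the formula for $\rho_N(x;\s)$ stated in the lemma.

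It remains to verify properties~(1) and~(2). For~(1), fix $x\in[-\delta,\delta]$ and $t\in[0,1]$; then $\Re(1-txa_j)\geq 1/2$, so each factor $(1-txa_j)^{-s_j-\alpha_j} = \exp\bigl(-(s_j+\alpha_j)\log(1-txa_j)\bigr)$ is entire in $\s$, and the binomials $\prod_j\binom{-s_j}{\alpha_j}$ are polynomial in $\s$. A continuous-in-$t$ family of entire functions integrates to an entire function (by Morera's theorem combined with Fubini), so $\s\mapsto \rho_N(x;\s)$ is holomorphic on $\CC^d$. For~(2), on a compact set $K\subset\CC^d$ the binomials are uniformly bounded, and since $|1-txa_j|\in[1/2,3/2]$ and $\Re(1-txa_j)\geq 1/2$, the quantity $|(1-txa_j)^{-s_j-\alpha_j}|$ is bounded uniformly in $(t,x,\s)\in[0,1]\times[-\delta,\delta]\times K$; combining with the finite sum over $\alphab$ with $|\alphab|=N+1$ yields the desired constant $C$. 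The only mildly delicate point is the uniform control of the principal-branch power functions on $K$; once the bound $\Re(1-txa_j)\geq 1/2$ is in hand this reduces to a standard compactness argument.
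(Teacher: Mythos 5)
Your proof is correct and is exactly the intended argument: the paper itself does not reprove this lemma but defers to \cite[Lemma 6]{chap_essouabri}, where it is established in the same way, namely by applying Taylor's formula with integral remainder at order $N$ to $x\mapsto \prod_j (1-xa_j)^{-s_j}$ and computing the derivatives via the multinomial Leibniz rule. Your verification of points (1) and (2) from the bound $\Re(1-txa_j)\geq 1/2$ (hence uniform bounds on $\log(1-txa_j)$ and on the principal-branch powers over $[0,1]\times[-\delta,\delta]\times K$) is also the standard and correct way to conclude.
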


\begin{proof}
  {\bf (point 1 of \Cref{prolong})}\\
  For short, let us write $L(s)$ for $L_{A,\chi,P}(s)$.

  First we remark that if $\a=(a_1,\dots, a_d)=(0,\dots, 0)$, then $P$
  is of the form $P(X) = c X^d$ where $c > 0$.  It follows that
  $L(s)=d c^{1-s} L_\chi (d s-d+1)$ and therefore point 1 of
  \Cref{prolong} holds in this case, by the known properties of these
  Dirichlet L-functions for periodic coefficients, as recalled in
  \Cref{dirichlet}.

  We will assume in the sequel that $\a \neq (0,\dots,0)$ and set $\delta ={\left(2 \max_j |a_j|\right)}^{-1} >0$.
  We will note in the sequel $s=\sigma+ i\tau$ where $\sigma =\Re (s)$ and $\tau =\Im (s)$.
  It is easy to see that
\begin{equation*}
  \left|\chi(n) \frac{P'(n)}{{P(n)}^s}\right| \ll \frac{1}{n^{d\sigma -(d-1)}}.
\end{equation*}
It follows that $s\mapsto L(s)$ converges absolutely in the half-plane $\{ \Re(s) >1\}$.\\

As the act of removing or adding a finite number of terms does not
change the holomorphicity, we can choose the integer $ A $ as
large as needed.
Let us choose here $A \in \NN^*$ such that $A \geq 2 \max_{j} |a_j| = \delta^{-1}$.\\
It is clear that we can also assume without loss of generality that
the polynomial $P$ is unitary. It follows that
\begin{equation*}
  P(X)= \prod_{j=1}^d (X-a_j) \quad {\mbox { and }} \quad P'(X) = P(X) \left( \sum_{j=1}^d \frac{1}{X-a_j}\right).
\end{equation*}
We deduce that for all $s \in \CC$ satisfying $\sigma = \Re (s)> 1$ there holds:
\begin{eqnarray*}
L(s)&=& \sum_{n=A}^{+\infty} \chi(n) \frac{P'(n)}{{P(n)}^s}
= \sum_{j=1}^d \sum_{n=A}^{+\infty} \frac{\chi(n)}{{(n-a_j)}^s \prod_{k\neq j} {(n - a_k)}^{s-1}}\\
&=& \sum_{j=1}^d \sum_{n=A}^{+\infty} 
\frac{\chi(n)}{n^{ds -(d-1)}} {\left(1-\frac{a_j}{n} \right)}^{-s} \prod_{k\neq j} {\left(1-\frac{a_k}{n} \right)}^{-s+1}.
\end{eqnarray*}
Let $N\in \NN$. \Cref{taylorplus} with $x=1/n$ and the previous relation imply that for all $s\in \CC$ verifying $\sigma =\Re(s) >1$ we have:
\begin{equation*}
L(s)= \sum_{j=1}^d \sum_{n=A}^{+\infty} 
\frac{\chi(n)}{n^{ds -(d-1) }} \left[\sum_{\ell=0}^N c_{\ell}\left(f_j(s)\right) \frac{1}{n^{\ell}} + \frac{1}{n^{N+1}} \rho_N \left(1/n ; f_j(s)\right)\right],
\end{equation*} 
where $f_j(s) =(s_1,\dots,s_d)$ with $s_k =s-1$ if $k\neq j$ and $s_j=s$.\\

We deduce that for all $s\in \CC$ verifying $\sigma =\Re(s) >1$ there holds:
\begin{eqnarray}\label{bon1}
L(s)&=& \sum_{\ell=0}^N \left[\sum_{j=1}^{d} c_{\ell}\left(f_j(s)\right)\right] L_{A,\chi}\left(d s -(d-1)+\ell\right) \nonumber \\
& & + \sum_{n=A}^{+\infty} 
\frac{\chi(n)}{n^{ds -(d-1)+N+1}} \left[\sum_{j=1}^d \rho_N \left(1/n ; f_j(s)\right)\right],
\end{eqnarray}
where $L_{A,\chi}(s):= \sum_{n=A}^{+\infty} \frac{\chi(n)}{n^s} = L_\chi(s) - \sum_{n=1}^{A-1} \frac{\chi(n)}{n^s}$.\\

Moreover,
\begin{enumerate}
\item the point 2 of \Cref{taylorplus} and the dominated convergence theorem of Lebesgue imply that 
\begin{equation*}s\mapsto \sum_{n=A}^{+\infty} 
\frac{\chi(n)}{n^{ds -(d-1)+N+1}} \left[\sum_{j=1}^d \rho_N \left(1/n ; f_j(s)\right)\right]\end{equation*}
is defined and is holomorphic in the half-plane $\{\sigma > 1-\frac{N+1}{d}\}$;
\item the classical properties of the Dirichlet $L$-functions imply that the function $s\mapsto L_{A,\chi}(s)$ is holomorphic in the whole complex plane $\CC$.
\end{enumerate}
These last two points and identity~\eqref{bon1} implies that $s \mapsto L(s) $ has a holomorphic extension to the half-plane $\{\sigma > 1-\frac{N+1}{d}\}$.
As $N\in \NN$ is arbitrary, we deduce that $s\mapsto L(s)$ has a holomorphic continuation to the whole complex plane $\CC$.
\end{proof}

Before proving point 2 of \Cref{prolong}, let us do a preliminary computation, using the notations of \Cref{taylorplus}. It is easy to see that for all $\ell\in \NN$:
\begin{eqnarray}\label{bon2}
\sum_{j=1}^d c_{\ell}\left(f_j(s)\right) &=& \sum_{j=1}^d{(-1)}^{\ell} \sum_{\substack{\alphab \in \NN^d \\ |\alphab| =\ell }} \a^{\alphab} \binom{-s}{\alpha_j} \prod_{k\neq j} \binom{-s+1}{\alpha_k} \nonumber \\
&=& \sum_{j=1}^d {(-1)}^{\ell} \sum_{\substack{\alphab \in \NN^d \\ |\alphab| =\ell }} \a^{\alphab} \frac{s+\alpha_j-1}{s-1} \prod_{k=1}^d \binom{-s+1}{\alpha_k} \nonumber \\
&=& {(-1)}^{\ell} \sum_{\substack{\alphab \in \NN^d \\ |\alphab| =\ell }} \a^{\alphab} \prod_{k=1}^d \binom{-s+1}{\alpha_k} \sum_{j=1}^d \frac{s+\alpha_j-1}{s-1} \nonumber \\
&=& {(-1)}^{\ell} \sum_{\substack{\alphab \in \NN^d \\ |\alphab| =\ell }} \a^{\alphab} \prod_{k=1}^d \binom{-s+1}{\alpha_k} \frac{d s-d + \ell}{s-1}.
\end{eqnarray}
Relations~\eqref{bon1} and~\eqref{bon2} imply that for all $s\in \CC$ satisfying $\sigma =\Re(s) >1$ we have:
\begin{eqnarray}\label{bon3}
(s-1) L(s)&=& \sum_{\ell=0}^N \left[ {(-1)}^{\ell} \sum_{\substack{\alphab \in \NN^d \\ |\alphab| =\ell }} \a^{\alphab} \prod_{k=1}^d \binom{-s+1}{\alpha_k}\right] \left(d s-d + \ell\right) 
L_{A,\chi}\left(d s -(d-1)+\ell\right) \nonumber \\
& & + (s-1) \sum_{n=A}^{+\infty} 
\frac{\chi(n)}{n^{ds -(d-1)+N+1}} \left[\sum_{j=1}^d \rho_N \left(1/n ; f_j(s)\right)\right].
\end{eqnarray}

\begin{proof}
  {\bf (point 2 of \Cref{prolong})}\\
  First let us recall from~\eqref{eval_L_chi} the formula 
  \begin{equation}\label{zetanegative}
    m ~L_\chi (1-m) = - \Psi_\chi(X^m) \quad \forall m\in \NN^*.
  \end{equation}
  Let $m \in \NN^*$. Set $N= d m$. In particular, $1 - m >
  1-\frac{N+1}{d}$. If $|\alphab|=N+1$, then for any $j=1,\dots, d$:
  \begin{equation*}
    \alpha_j >m-1 \quad {\mbox { or }} \quad {\mbox { there exists }} k \in \{1,\dots d\}\setminus \{j\} {\mbox { such that }} \alpha_k >m.
  \end{equation*}
  We deduce that for any $j=1,\dots, d$ and any $x$: 
  \begin{eqnarray*}
    \rho_N \left(x; f_j(1-m)\right) &=& {(-1)}^{N+1} (N+1) \sum_{\substack{\alphab \in \NN^d \\ |\alphab| = N+1}} \a^{\alphab}\binom{m-1}{\alpha_j} \prod_{k\neq j}^d \binom{m}{\alpha_k} \\
    & & \times 
    \int_0^1 {(1-t)}^N {(1- t x a_j)}^{m-\alpha_j} \prod_{k\neq j}^d {(1- t x a_k)}^{m-1-\alpha_k} ~dt\\
    &=& 0.
  \end{eqnarray*}
  It follows then from~\eqref{bon3} that 
  \begin{eqnarray}\label{neg1}
    -m L(1-m)&=& \sum_{\ell=0}^N \left[ {(-1)}^{\ell} \sum_{\substack{\alphab \in \NN^d \\ |\alphab| =\ell }} \a^{\alphab} \prod_{k=1}^d \binom{m}{\alpha_k}\right] \left(\ell-d m\right) 
    L_{A,\chi}\left(1+\ell-d m\right) \nonumber \\
    &=& \sum_{\ell=0}^N \left[ {{(-1)}^{\ell}} \sum_{\substack{\alphab \in \NN^d \\ |\alphab| =\ell }} \a^{\alphab} \prod_{k=1}^d \binom{m}{\alpha_k}\right] \left(\ell-d m\right) 
    L_{\chi} \left(1+\ell-d m\right) \nonumber\\
    & & -\sum_{\ell=0}^N \left[ {{(-1)}^{\ell}} \sum_{\substack{\alphab \in \NN^d \\ |\alphab| =\ell }} \a^{\alphab} \prod_{k=1}^d \binom{m}{\alpha_k}\right] \left(\ell-d m\right) 
    \left(\sum_{n=1}^{A-1} \chi(n) n^{d m -\ell -1}\right).
  \end{eqnarray}
  This sum therefore splits into two parts. Remarking that if
  $|\alphab| > N$ then there exists $k$ such that $\alpha_k > m$ and
  hence $ \binom{m}{\alpha_k}=0$, we can compute the second part:
  \begin{eqnarray}\label{neg2}
    \kappa&:=& -\sum_{\ell=0}^N \left[ {(-1)}^{\ell} \sum_{\substack{\alphab \in \NN^d \\ |\alphab| =\ell }} \a^{\alphab} \prod_{k=1}^d \binom{m}{\alpha_k}\right] \left(\ell-d m\right) 
    \left(\sum_{n=1}^{A-1} \chi(n) n^{d m -\ell -1}\right)\\
    &=&\sum_{n=1}^{A-1} \chi(n) n^{d m -1} \sum_{\ell=0}^N \sum_{\substack{\alphab \in \NN^d \\ |\alphab| =\ell }} (d m -\sum_{j=1}^d \alpha_j) 
    \left[ \prod_{k=1}^d \binom{m}{\alpha_k}{\left(-\frac{a_k}{n}\right)}^{\alpha_k}\right]\nonumber\\
&=& \sum_{n=1}^{A-1} \chi(n) n^{d m -1} \sum_{\alphab \in {\{0,\dots, m\}}^d} \ (d m -\sum_{j=1}^d \alpha_j) 
\left[ \prod_{k=1}^d \binom{m}{\alpha_k}{\left(-\frac{a_k}{n}\right)}^{\alpha_k}\right].\nonumber
\end{eqnarray}
Continuing this computation by splitting this sum in two, we have
\begin{eqnarray}\label{neg3}
\kappa&=& d m \sum_{n=1}^{A-1} \chi(n) n^{d m -1} 
\prod_{k=1}^d {\left(1-\frac{a_k}{n}\right)}^{m}
 -\sum_{n=1}^{A-1} \sum_{j=1}^d \chi(n) n^{d m -1} \frac{m\left(-\frac{a_j}{n}\right)}{1-\frac{a_j}{n}}\prod_{k=1}^d {\left(1-\frac{a_k}{n}\right)}^{m}\nonumber\\
&=& d m \sum_{n=1}^{A-1} n^{-1} \chi(n) {P(n)}^m+ m \sum_{n=1}^{A-1} \sum_{j=1}^d \chi(n) \frac{a_j}{ n(n-a_j)} {P(n)}^m \nonumber\\
&=& m \sum_{n=1}^{A-1} {\chi(n) P(n)}^{m-1} P'(n).
\end{eqnarray}

Relations~\eqref{zetanegative},~\eqref{neg1},~\eqref{neg2} and~\eqref{neg3} imply that 
\begin{equation}\label{fin1}
(-m) L(1-m)= \sum_{\ell=0}^N \left[ {(-1)}^\ell \sum_{\substack{\alphab \in \NN^d \\ |\alphab| =\ell }} \a^{\alphab} \prod_{k=1}^d \binom{m}{\alpha_k}\right] \Psi_\chi(X^{d m -\ell}) + m \sum_{n=1}^{A-1} \chi(n) {P(n)}^{m-1} P'(n).
\end{equation}
On the other hand, it is easy to see that 
\begin{eqnarray*}
{P(X)}^m &=& \prod_{j=1}^d {(X-a_j)}^m= \prod_{j=1}^d \left( \sum_{\alpha_j =0}^m \binom{m}{\alpha_j} {(-a_j)}^{\alpha_j} X^{m-\alpha_j}\right)\\
&=& \sum_{\alphab \in {\{0,\dots, m\}}^d} {(-1)}^{|\alphab|} \a^{\alphab} \left(\prod_{j=1}^d \binom{m}{\alpha_j}\right) X^{d m -|\alphab|}
= \sum_{\ell=0}^N {(-1)}^{\ell} \left[\sum_{\substack{\alphab \in \NN^d \\ |\alphab|=\ell}} \a^{\alphab} \prod_{j=1}^d \binom{m}{\alpha_j}\right] X^{d m -\ell}.
\end{eqnarray*}
It follows that 
\begin{equation*}\Psi_\chi({P(X)}^m)= \sum_{\ell=0}^N \left[{(-1)}^{\ell} \sum_{\substack{\alphab \in \NN^d \\ |\alphab|=\ell}} \a^{\alphab} \prod_{j=1}^d \binom{m}{\alpha_j}\right] \Psi(X^{d m -\ell}).\end{equation*}
We then deduce from~\eqref{fin1} that 
$L(1-m)= -\frac{1}{m} \Psi_\chi({P(X)}^m) -\sum_{n=1}^{A-1} \chi(n) {P(n)}^{m-1} P'(n)$. This completes the proof of \Cref{prolong}.
\end{proof}

\begin{remark}
Consider the element $e^{Xt}$ in the ring $\QQ[X]\llbracket t \rrbracket$. This is not a
polynomial in $X$, but one can still formally apply point 2 of \Cref{prolong} with $m=1$, $A=1$ and $P = e^{Xt}$. This gives the defining equation~\eqref{defi_Psi}
for $\Phi_\chi$.
\end{remark}

\section{Example and observation for $\chi_3$ and $X(X+U)$}\label{observe}

Let $\chi_3$ be the unique primitive Dirichlet character of conductor $3$. It takes values $1,-1,0$ on $1,2,3$. Let
\begin{equation*}
  L_3(s) = \sum_{n \geq 1} \chi(n) n^{-s}
\end{equation*}
be the associated Dirichlet $L$-function, holomorphic on $\CC$.

Because the character $\chi_3$ has values in $\QQ$, values of $L_3$ at
negative integers are rational numbers. For all primes $p > 3$, these
values satisfy congruences modulo $p$ that allow to define a $p$-adic
$L$-function by $p$-adic interpolation. A standard reference
about this is~\cite{cohen_livre_2}.

In this special case, let us denote by $\Psi_3$ the associated linear
form on $\QQ[X]$, defined by
\begin{equation*}
  \Psi_3(X^m) = -m L_3(1-m)
\end{equation*}
for $m \geq 0$. By~\eqref{defi_Psi}, the exponential generating series
of values of $\Psi_3$ is the odd series
\begin{equation*}
  -t \frac{e^t - e^{2t}}{1-e^{3t}} = -\frac{1}{3} t + \frac{2}{3} \frac{t^{3}}{3!} - \frac{10}{3} \frac{t^{5}}{5!} + \frac{98}{3} \frac{t^{7}}{7!} - \frac{1618}{3} \frac{t^{9}}{9!} + \frac{40634}{3} \frac{t^{11}}{11!} - \frac{1445626}{3} \frac{t^{13}}{13!} + \cdots
\end{equation*}

With this definition, for $m > 0$,
\begin{equation*}
  L_3(1-m) = - \Psi_3(X^m) / m.
\end{equation*}

One can then use the knowledge of $\Psi_3$ to compute values of any
function $L_{\chi_3,P}$ at negative integers. For example, consider
$P=X(X+1)$. Then
\begin{equation*}
  L_{\chi_3,P}(-1) = - \frac{1}{2} \Psi_3(X^4+2 X^3+X^2)\\
  = - \Psi_3(x^3) = -2/3.
\end{equation*}

\medskip

Let us now introduce a new variable $u$ and define polynomials $p_m$ in
$u$ for $m \geq 1$ by the formula
\begin{equation*}
  p_m(u) = \Psi_3\bigl( {(X(X+u))}^m \bigr) / m.
\end{equation*}
By our general results, these polynomials are the opposite of values of
\begin{equation}
  L_{3,X(X+u)}(s) = \sum_{n \geq 1} \chi_3(n) \frac{ 2 n + u}{{(n(n+u))}^s}
\end{equation}
at negative integers. For all $m\geq 0$, the polynomial $p_m(u)$ is odd because $\Psi_3(X^m)$ always vanishes when $m$ is even.

The first few polynomials, for $m \geq 1$, are
\begin{align*}
 p_1 = \frac{1}{3} u, \quad
 p_2 = -\frac{2}{3} u, \quad
 p_3 = -\frac{2}{9} u^{3} + \frac{10}{3} u,\quad
 p_4 = \frac{10}{3} u^{3} - \frac{98}{3} u,\\
 p_5 = \frac{2}{3} u^{5} - \frac{196}{3} u^{3} + \frac{1618}{3} u,\quad
 p_6 = -\frac{98}{3} u^{5} + \frac{16180}{9} u^{3} - \frac{40634}{3} u,\\
 p_7 = -\frac{14}{3} u^{7} + 1618 u^{5} - \frac{203170}{3} u^{3} + \frac{1445626}{3} u.
\end{align*}

When replacing $u$ by an element of some finite field $\mathbf{F}_p$
for $p > 3$, one observes congruences similar to those for the values
at negative integers of the standard $L_3$ function, as follows.

For instance, assuming $u^5=u$ and mapping all coefficients to
$\mathbf{F}_5$, one gets
\begin{multline*}
  {\color{red} 3 u}, {\color{blue} 4 u, 3 u^{3}, u, 2 u^{3}}, 4 u, \ldots
\end{multline*}
which seems to have a period $p-1 = 4$, except for the first term.

For $p=7$, the similar computation gives
\begin{multline*}
  {\color{red} 2 u}, {\color{blue} 3 u, u^{3} + 6 u, 6 u^{3}, 4 u^{5} + 2 u, 2 u^{3} + 2 u, 6 u^{5} + 3 u^{3}}, 3 u , \ldots
\end{multline*}
with apparently a period $p-1 = 6$, except for the first term.

For $p=11$, one gets
\begin{multline*}
  {\color{red}7 u}, {\color{blue}8 u, 10 u^{3} + 4 u, 4 u^{3} + 7 u, 3 u^{5} + 3 u^{3} + 7 u, 7 u^{5} + 5 u^{3}, u^{7} + 10 u^{5} + 9 u,} \\ {\color{blue}7 u^{7} + 8 u^{3} + u, 2 u^{9} + 5 u^{5} + 2 u^{3} + 6 u, 9 u^{7} + u^{5} + 6 u^{3} + 5 u, u^{9} + 8 u^{7} + 10 u^{5} + 9 u^{3}}, 8 u, \ldots
\end{multline*}
with apparently a period $p-1 = 10$, except for the first term.

In general, for $p > 3$ prime, the same pattern seems to hold, with a
periodic sequence of period $p-1$ starting at the second term. This has
been checked on the first two such periods for all primes less than $500$.

One can note that the polynomials $p_m(u)$ have some resemblance with
the Bernoulli polynomials that are the values of Hurwitz zeta function
at negative integers.

The same periodicity of period $p-1$ seems to hold also for $p > 3$
and the pair $(\chi_4, X(X+u))$, where $\chi_4$ is the unique
primitive Dirichlet character of conductor $4$. One can certainly
expect this phenomemon to extend to other characters and periodic functions.



\bibliographystyle{plain}
\bibliography{Lfonction}

\verb?chapoton@unistra.fr?\\
Frédéric Chapoton\\
Institut de Recherche Mathématique Avancée,\\
UMR 7501 Université de Strasbourg et CNRS\\
7 rue René-Descartes, 67000 Strasbourg, France

\end{document}